\documentclass[10pt]{article}
\usepackage{graphicx}      
\usepackage{natbib}        
\usepackage{amsfonts,amsmath}
\usepackage{amsthm}      
\usepackage{thmtools}    
\usepackage{dsfont}
\usepackage{color}
\usepackage{hyperref}
\usepackage{enumitem} 
\setlist[itemize,1]{leftmargin=\dimexpr 22pt}
\usepackage{cite}
\usepackage{microtype,enumerate,url}
\usepackage{BOONDOX-ds}
\usepackage{tikz} 
\usetikzlibrary{patterns}
\usepackage[fulladjust]{marginnote}
\usepackage{quoting}
\usepackage{caption}
\usepackage{subcaption}
\usepackage{diagbox}





\declaretheorem[style=definition]{theorem}

\declaretheorem[style=definition]{lemma}

\declaretheorem[style=definition,numbered=no]{standing assumption}
\declaretheorem[style=definition]{assumption}


\newcommand {\nn}{\nonumber}
\newcommand{\beq}{\begin{equation}}
\newcommand{\eeq}{\end{equation}}
\newcommand {\bseq}{\begin{subequations}}
\newcommand {\eseq}{\end{subequations}}
\newcommand {\bma}{\left[}
\newcommand {\ema}{\right]}

\newcommand {\Zplus}{\mathbb{Z}_{+}} 	
\newcommand {\R}{\mathbb{R}} 	
\newcommand {\Co}{\mathbb{C}} 	
\newcommand {\Cominus}{\mathbb{C}_{-}} 	
\newcommand {\Cozero}{\mathbb{C}_{0}} 	

\newcommand{\Image}{\operatorname{im}} 
\newcommand{\rank}{\operatorname{rank}} 
\newcommand{\transpose}{\mathsf{T}} 

\newcommand{\norm}[1]{\left\lVert#1\right\rVert}
\newcommand{\Span}{\operatorname{span}}

\newcommand{\spectrum}[1]{{\sigma({#1})}}
\newcommand{\overbar}[1]{\mkern 1.5mu\overline{\mkern-1.5mu#1\mkern-1.5mu}\mkern 1.5mu}

\begin{document}

\title{Least Squares Model Reduction:\\ 
A Two-Stage System-Theoretic Interpretation}

\author{Alberto Padoan\thanks{%
Department of Electrical and Computer Engineering, University of British Columbia, 2332 Main Mall, Vancouver, BC V6T 1Z4, Canada (e-mail: \url{alberto.padoan@ece.ubc.ca}).}}

\date{} 

\maketitle

\begin{abstract}
Model reduction simplifies complex dynamical systems while preserving essential properties. This paper revisits a recently proposed system-theoretic framework for least squares moment matching. It interprets least squares model reduction in terms of two steps process: constructing a surrogate model to satisfy interpolation constraints, then projecting it onto a reduced-order space. Using tools from output regulation theory and Krylov projections, this approach provides a new view on classical methods. For illustration, we reexamine the least-squares model reduction method by Lucas and Smith, offering new insights into its structure.
\end{abstract}


\section{Introduction} 

Model reduction simplifies dynamical systems while preserving their key features~\citep{antoulas2005approximation}. For linear time-invariant systems, moment matching has emerged as a popular approach~\citep{antoulas2005approximation,georgiou1983partial,kimura1986positive,antoulas1990solution,byrnes1995complete,georgiou1999interpolation,byrnes2001generalized,grimme1997krylov,gallivan2004sylvester,gallivan2006model,astolfi2010model}. It approximates a system's transfer function by a lower-degree function via rational interpolation, ensuring their moments — defined as coefficients of the Laurent series expansion — coincide at specified points. Efficient and robust implementations often use Krylov projectors~\citep[Chapter 11]{antoulas2005approximation}. 
Over the past three decades, moments of linear time-invariant systems have been characterized using Sylvester equations~\citep{gallivan2004sylvester,gallivan2006model} and, under specific assumptions, steady-state responses~\citep{astolfi2010model}.

However, classical moment matching methods enforce exact interpolation, which can be overly restrictive in practice. Least squares moment matching relaxes this requirement, optimizing mismatches in a least squares sense. For linear systems, it has a rich literature~\citep{shoji1985model,aguirre1992least,aguirre1994partial,smith1995least,aguirre1995algorithm,gugercin2006model,gu2010model,gustavsen1999rational,mayo2007framework,berljafa2017rkfit,nakatsukasa2018aaa,antoulas2020interpolatory}, with connections to Padé approximation~\citep{aguirre1992least,aguirre1994partial,aguirre1994model} and Prony's method~\citep{gugercin2006model,parks1987digital}. Least squares model reduction for linear and nonlinear systems is studied in~\citep{padoan2021model,padoan2023model}, building on the framework of~\citep{astolfi2010model}. Moments are characterized using output regulation theory~\citep{byrnes1989steady,isidori1990output,byrnes2000output,isidori1995nonlinear}, resulting in a time-domain formulation of least squares moment matching as a constrained optimization problem.   This formulation connects frequency-domain interpolation to system-theoretic concepts such as interconnections, invariance equations, and steady-state responses. 

\textit{Contributions}.
This paper makes two main contributions. First, we provide a new system-theoretic interpretation of least squares moment matching as a two-step model reduction process, bridging classical projection methods with recent output regulation insights. Second, we illustrate how this framework unifies and clarifies existing methods, particularly those based on moment matching at zero. For illustration, we analyze the approach of Smith and Lucas~\citep{smith1995least}, showing that this method yield models within the family
derived in~\citep{padoan2021model}.

\textit{Paper organization}. The rest of this work is structured as follows. Section~\ref{sec:problem_formulation} introduces the problem setup, including key definitions, assumptions, and the formulation of the moment matching problem. Section~\ref{sec:moment-matching} discusses moment matching for linear systems, focusing on methods using Sylvester equations and Krylov projections.
Section~\ref{sec:moment-matching} recalls the least-squares moment matching framework developed in~\citep{padoan2023model}.
Section~\ref{sec:system-theoretic-interpretation} explores the system-theoretic interpretation of least squares moment matching, presenting it as a two-step model reduction process, highlighting connections between our  least squares moment matching framework and existing methods. Section~\ref{sec:conclusion} concludes the paper with a summary and future directions.

\textit{Notation}.
$\Zplus$, $\R$ and $\Co$ denote the set of non-negative integers, the set of real numbers, and the set of complex numbers, respectively. 
$\Cominus$ and $\Cozero$ denote the sets of complex numbers with negative and zero real part, respectively.
$I$ denotes the identity matrix.
$\spectrum{A}$ denotes the spectrum of the matrix ${A \in \R^{n \times n}}$.
$M^{\transpose}$ denotes the transpose of the matrix  ${M \in \R^{p \times m}}$.
$\norm{\,\cdot\,}_2$ and $\norm{\,\cdot\,}_{2*}$ denote the Euclidean $2$-norm on $\R^{n}$ and the corresponding dual norm~\citep[p.637]{boyd2004convex}, respectively. 
$f^{(k)}(\cdot)$ denotes the derivative of order ${k\in\Zplus}$ of the function $f(\cdot)$, provided it exists, with $f^{(0)}(\cdot) = f(\cdot)$ by convention. $\lfloor x \rfloor$ denotes the largest integer less than or equal to ${x\in\R}$.

\section{Problem setup} \label{sec:problem_formulation}
 
Consider a continuous-time, single-input, single-output, 
linear, time-invariant system described by the equations
\beq \label{eq:system-linear}
\quad \dot{x} = Ax+Bu, \quad y=Cx,
\eeq
where ${x(t)\in\R^n}$, ${u(t)\in\R}$, ${y(t)\in\R}$, ${A\in\R^{n \times n}}$, ${B\in\R^{n \times 1}}$ and ${C\in\R^{1\times n}}$, with transfer function defined as 
\beq \label{eq:transfer-function} 
W(s)=C(sI-A)^{-1}B.
\eeq
For the notion of moment to make sense, we make the following standing assumption throughout the paper.

\begin{assumption} \label{ass:minimality}
The system~\eqref{eq:system-linear} is minimal, \textit{i.e.} the pair $(A,B)$ is controllable and the pair $(A,C)$ is observable.
\end{assumption}

\noindent
The \emph{moment of order\footnote{Moments may also be defined at \textit{poles} of the transfer function~\eqref{eq:transfer-function}. Following~\citep{padoan2017eigenvalues,padoan2017poles, padoan2017mrp,  padoan2019isolated1}, if ${s^{\star} \in \Co}$ is a pole of order ${m>0}$, the moment of order ${-k}$, with ${1 \le k \le m}$, is the coefficient of $(s-s^{\star})^{-k}$ in the Laurent series of~\eqref{eq:transfer-function} around~${s^{\star}}$.} ${k \in \Zplus}$} of system~\eqref{eq:system-linear} at ${s^{\star} \in \Co}$, with ${s^{\star} \not\in \spectrum{A}}$, is defined as
$${\eta_k(s^{\star}) = \tfrac{(-1)^k}{k!} W^{(k)}(s^{\star})}.$$

\noindent 
Given distinct \emph{interpolation points} ${\{s_i\}_{i=1}^N}$, with 
${s_i \in \Co}$ and ${s_i \not \in  \spectrum{A}}$,
and  corresponding  \emph{orders of interpolation}
${\{k_i\}_{i=1}^N}$, with ${k_i\in \Zplus}$,
model reduction by moment matching  consists in finding a system
\beq \label{eq:system-rom}
  \quad  \dot{\xi} = F\xi+Gv, \quad \psi = H\xi,
\eeq 
where ${\xi(t) \in\R^{r}}$, ${v(t)\in\R}$, ${\psi(t)\in\R}$, ${F\in\R^{r \times r}}$, ${G\in\R^{r \times 1}}$, ${H\in\R^{1\times r}}$, the transfer function of which
\beq \label{eq:transfer-function-model} 
\hat{W}(s)=H(sI-F)^{-1}G 
\eeq
satisfies the \emph{interpolation conditions}
\beq  \label{eq:matching-condition-linear}
\quad \eta_{j}(s_i)= \hat{\eta}_{j}(s_i) ,\quad j \in \{0, \ldots, k_i\}, \ i \in \{1, \ldots, N\},
\eeq 
where $\eta_{j}(s_i) $ and $ \hat{\eta}_{j}(s_i)$ denote the moments of order $j$  of~\eqref{eq:system-linear}   and~\eqref{eq:system-rom} at $s_i$, respectively. The system~\eqref{eq:system-rom} is said to be a \emph{model of system~\eqref{eq:system-linear}}   and to \emph{match the moments of system~\eqref{eq:system-linear} (or achieve moment matching) at $\{ s_i\}^{N}_{i=1}$} if the interpolation conditions~\eqref{eq:matching-condition-linear} hold.    Furthermore,~\eqref{eq:system-rom} is said to be a \emph{reduced order model of system~\eqref{eq:system-linear}} if ${r < n}$.

It is well-known that a model of order $r$ can match up to $2r$ moments~\citep[Chapter 11]{antoulas2005approximation}. The number of interpolation conditions is thus larger than the number of moments that can be matched if ${\nu = \sum_{i=1}^{N} (k_i+1)}> 2r$. In this case, the interpolation conditions~\eqref{eq:matching-condition-linear} give rise to an overdetermined system of equations which can be only solved in a {least squares sense}, leading directly to the \emph{model reduction problem by least squares moment matching}.

Given distinct interpolation points ${\{s_i\}_{i=1}^N}$, with 
${s_i \in \Co}$ and ${s_i \not \in  \spectrum{A}}$,
and corresponding  \emph{orders of interpolation} ${\{k_i\}_{i=1}^N}$, with ${k_i\in \Zplus}$,
least squares model reduction by moment matching  consists in finding a system~\eqref{eq:system-rom} which minimizes the index
\beq \label{eq:index}
\mathcal{J} = \sum_{i=1}^{N} \sum_{j=0}^{k_i} \left| \eta_{j}(s_i) - \hat{\eta}_{j}(s_i) \right|^2 .
\eeq 

The model~\eqref{eq:system-rom} is said to \emph{achieve least squares moment matching (at $\{ s_i\}^{N}_{i=1}$)} if it minimizes the index~\eqref{eq:index}.  For linear systems, both moment matching and least squares moment matching can be seen as a rational (Hermite) interpolation problem and,   thus, they are generically well-posed (see, e.g.,~\citep{gutknecht1990sense}).

\section{Moment matching} \label{sec:moment-matching}

Our analysis leverages standard moment matching methods, adapted
from~\citep{gallivan2004sylvester,gallivan2006model,astolfi2010model} and~\citep[Chapter 11]{antoulas2005approximation} with minor variations.

\subsection{Moment matching via Sylvester equations}

\begin{assumption} \label{ass:signal-generator-linear}
The matrix ${S\in \R^{\nu \times \nu}}$ is non-derogatory\footnote{A matrix is non-derogatory if its characteristic polynomials and its minimal polynomial coincide~\citep[p.178]{horn1994matrix}.} and has characteristic polynomial
\beq \label{eq:characteristic-polynomial}
\chi_S(s)=\prod_{i=1}^{N} (s-s_i)^{k_i+1},
\eeq 
with ${s_i \not \in  \spectrum{A}}$. The matrix ${L\in\R^{1\times \nu}}$ is such that the pair $(S,L)$ is observable.
\end{assumption}

\begin{lemma} \citep[Lemmas 3 and 4]{astolfi2010model}   \label{lemma:astolfi-3}
Consider system~\eqref{eq:system-linear}. Suppose Assumptions~\ref{ass:minimality} and~\ref{ass:signal-generator-linear} hold. Then there is a non-singular matrix ${T\in\R^{\nu\times \nu}}$ such that
\beq \nn
C\Pi T 
= 
[\, 
\eta_0(s_1) \cdots \eta_{k_1}(s_1) 
            \cdots 
\eta_0(s_N) \cdots \eta_{k_N}(s_N) 
\,],
\eeq
where  ${\Pi \in \R^{n\times \nu}}$ is the solution of the Sylvester equation
\beq \label{eq:Sylvester-equation-astolfi}
A\Pi +BL=\Pi S.
\eeq
\end{lemma}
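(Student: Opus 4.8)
The plan is to exploit the known structure of solutions to the Sylvester equation~\eqref{eq:Sylvester-equation-astolfi} together with the correspondence between $S$ and the interpolation data encoded in Assumption~\ref{ass:signal-generator-linear}. First I would note that, since $S$ has no eigenvalue in $\spectrum{A}$, the Sylvester equation $A\Pi + BL = \Pi S$ has a unique solution $\Pi$, and minimality of~\eqref{eq:system-linear} together with observability of $(S,L)$ guarantees $\Pi$ has full column rank. The heart of the matter is to show that $C\Pi$, read through the right coordinates on $\R^\nu$, returns exactly the list of moments $\eta_j(s_i)$.

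The cleanest route is to reduce to a canonical choice of $(S,L)$ and then argue that changing $(S,L)$ within the class of Assumption~\ref{ass:signal-generator-linear} only amounts to a change of basis $T$. Concretely: because $S$ is non-derogatory with characteristic polynomial~\eqref{eq:characteristic-polynomial} and $(S,L)$ is observable, the pair $(S,L)$ is similar to a fixed pair $(S_\circ, L_\circ)$ in, say, the companion/Jordan-like form associated with the points $s_i$ and multiplicities $k_i+1$; absorbing that similarity into $T$, it suffices to prove the claim for one convenient pair. For a single interpolation point $s^\star$ of multiplicity $k+1$, take $S_\circ$ to be the Jordan block $J_{k+1}(s^\star)$ and $L_\circ = e_1^{\transpose}$ (or the all-ones row, depending on normalization). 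Then the Sylvester equation can be solved column by column: writing $\Pi = [\,\pi_0 \ \pi_1 \ \cdots\ \pi_k\,]$, the equation unwinds into a recursion $(s^\star I - A)\pi_0 = B$ and $(s^\star I - A)\pi_\ell = \pi_{\ell-1}$ for $\ell \ge 1$, giving $\pi_\ell = (s^\star I - A)^{-(\ell+1)}B$. Hence $C\pi_\ell = C(s^\star I - A)^{-(\ell+1)}B$, and one recognizes this, after differentiating $W(s)=C(sI-A)^{-1}B$ repeatedly, as $\tfrac{(-1)^\ell}{\ell!}W^{(\ell)}(s^\star) = \eta_\ell(s^\star)$. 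Summing over the $N$ blocks handles the general case, since $S$ block-decomposes along its distinct eigenvalues.

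The main obstacle — and the only place real care is needed — is bookkeeping: matching the ordering and scaling conventions so that the similarity transformation that puts $(S,L)$ into canonical form yields precisely the matrix $T$ claimed, and checking that the Jordan-block recursion reproduces the factor $\tfrac{(-1)^k}{k!}$ rather than some other constant. This is handled by noting that $\tfrac{d^\ell}{ds^\ell}(sI-A)^{-1} = (-1)^\ell \ell!\,(sI-A)^{-(\ell+1)}$, so $W^{(\ell)}(s^\star) = (-1)^\ell \ell!\, C(s^\star I - A)^{-(\ell+1)}B = (-1)^\ell \ell!\, C\pi_\ell$, which rearranges to $C\pi_\ell = \eta_\ell(s^\star)$ exactly. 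Since the result is quoted from~\citep[Lemmas 3 and 4]{astolfi2010model}, one could alternatively cite it directly; but the argument above is self-contained and shows why the non-derogatory and observability hypotheses are exactly what is needed (the former so that a single $T$ works, i.e.\ $S$ is cyclic; the latter so that $\Pi T$ spans the relevant interpolation subspace without degeneracy).
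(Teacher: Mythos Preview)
The paper does not prove this lemma; it simply cites \citep[Lemmas~3 and~4]{astolfi2010model} and moves on. Your self-contained argument is correct and is essentially the standard proof one finds in that reference: reduce $(S,L)$ to a canonical observable pair by similarity (this is exactly where the non-derogatory and observability hypotheses are used), solve the Sylvester equation blockwise via a column recursion, and identify $C\pi_\ell$ with $\eta_\ell(s^\star)$ through the derivative formula for $(sI-A)^{-1}$. Your closing remark that one could alternatively just cite the result is precisely what the paper does.

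One minor bookkeeping point worth flagging: with the \emph{standard} upper-triangular Jordan block $J_{k+1}(s^\star)$ and $L_\circ=e_1^{\transpose}$, the column recursion actually reads $(s^\star I - A)\pi_\ell = -\,\pi_{\ell-1}$, giving $\pi_\ell = (-1)^\ell (s^\star I - A)^{-(\ell+1)}B$ and hence $C\pi_\ell = (-1)^\ell\eta_\ell(s^\star)$ rather than $\eta_\ell(s^\star)$. This does not damage the argument, since the alternating signs are absorbed into the free similarity $T$ (equivalently, take the Jordan block with $-1$'s on the superdiagonal, or choose $L_\circ$ differently, and your recursion holds verbatim). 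You already anticipated that the scaling/ordering conventions are the only place care is needed; this is the specific place.
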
 

\noindent
Lemma~\ref{lemma:astolfi-3} establishes   that the moments of system~\eqref{eq:system-linear} can be \emph{equivalently} characterized  using  the Sylvester equation~\eqref{eq:Sylvester-equation-astolfi}.  This, in turn, is  key to characterize moments in terms of the \textit{steady-state behavior} of the interconnection of system~\eqref{eq:system-linear} with a signal generator described by the equations
\beq \label{eq:system-signal-generator}
 \dot{\omega} = S\omega, \quad  \theta = L\omega,  
\eeq 
with ${\omega(t) \in \R^\nu}$ and  ${\theta(t) \in \R},$ for which the following hold.  

\begin{theorem} \citep[Theorem 1]{astolfi2010model}   \label{thm:astolfi-1}
Consider system~\eqref{eq:system-linear} and the signal generator~\eqref{eq:system-signal-generator}.
Suppose Assumptions~\ref{ass:minimality} and~\ref{ass:signal-generator-linear} hold.
Assume ${\spectrum{A} \subset \Cominus}$ and ${\spectrum{S} \subset \Cozero}$. Then the steady-state output response of the interconnected system~\eqref{eq:system-linear}-\eqref{eq:system-signal-generator}, with ${u =  \theta}$, can be written as ${y_{\textup{ss}}(t) = C\Pi \omega(t)}$, with ${\Pi \in \R^{n\times \nu}}$ the solution of the Sylvester equation~\eqref{eq:Sylvester-equation-astolfi}.
\end{theorem}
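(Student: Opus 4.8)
The plan is to prove Theorem~\ref{thm:astolfi-1} by combining the classical decomposition of the response of a linear system driven by an exosystem into a ``forced/steady-state'' part lying on an invariant subspace and a ``transient'' part that decays to zero. First I would write down explicitly the cascade interconnection: substituting $u = \theta = L\omega$ into \eqref{eq:system-linear} yields the autonomous system $\dot\omega = S\omega$, $\dot x = Ax + BL\omega$, $y = Cx$. The key algebraic object is the Sylvester equation \eqref{eq:Sylvester-equation-astolfi}, $A\Pi + BL = \Pi S$; I would first argue it has a unique solution $\Pi \in \R^{n\times\nu}$, which follows from $\spectrum{A}\cap\spectrum{S} = \emptyset$ (a consequence of $\spectrum{A}\subset\Cominus$, $\spectrum{S}\subset\Cozero$, which are disjoint).

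Next I would perform the coordinate change $z = x - \Pi\omega$. Differentiating and using the Sylvester equation gives $\dot z = \dot x - \Pi\dot\omega = Ax + BL\omega - \Pi S\omega = Ax + BL\omega - (A\Pi + BL)\omega = A(x-\Pi\omega) = Az$. Hence $z(t) = e^{At}z(0)$, and since $\spectrum{A}\subset\Cominus$ the matrix $A$ is Hurwitz, so $z(t)\to 0$ as $t\to\infty$ exponentially. Therefore $x(t) = \Pi\omega(t) + e^{At}z(0)$, and the output is $y(t) = C\Pi\omega(t) + Ce^{At}z(0)$. The term $Ce^{At}z(0)$ is the transient, decaying to zero, while $C\Pi\omega(t)$ is bounded (since $\spectrum{S}\subset\Cozero$ and $S$ non-derogatory means $\omega(\cdot)$ is bounded) and persistent; by the standard definition of steady-state response this identifies $y_{\textup{ss}}(t) = C\Pi\omega(t)$.

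The only genuinely delicate point — and the step I expect to be the main obstacle in a fully rigorous write-up — is justifying that $y_{\textup{ss}}(t) = C\Pi\omega(t)$ is the \emph{steady-state} response in the precise technical sense used in output regulation theory (e.g.\ that it is the unique bounded response to which all responses converge, or that it lies on the attractive invariant manifold $\{x = \Pi\omega\}$). This requires knowing that $\{x=\Pi\omega\}$ is invariant (immediate from $\dot z = Az$) and attractive (from $A$ Hurwitz), together with the boundedness of $\omega(\cdot)$ coming from $\spectrum{S}\subset\Cozero$ and non-derogatoriness of $S$ (so no polynomial-in-$t$ growth on the imaginary axis). I would invoke the standard center-manifold/steady-state characterization from~\citep{byrnes2000output,isidori1995nonlinear}, or simply cite \citep[Theorem 1]{astolfi2010model} directly, since the result is quoted from there. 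Everything else is the routine computation above.
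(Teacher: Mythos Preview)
Your argument is correct and is precisely the standard proof of this result: existence and uniqueness of $\Pi$ from the disjoint spectra, the change of variables $z = x - \Pi\omega$ giving $\dot z = Az$, exponential decay of the transient since $A$ is Hurwitz, and identification of $C\Pi\omega(t)$ as the steady-state output. Your remark about non-derogatoriness of $S$ ensuring boundedness of $\omega(\cdot)$ on $\Cozero$ is the right technical observation.

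Note, however, that the paper under review does \emph{not} supply its own proof of Theorem~\ref{thm:astolfi-1}: the result is simply quoted from~\citep[Theorem~1]{astolfi2010model} and used as background. So there is no ``paper's proof'' to compare against here; your proposal reproduces the classical argument from the original source, which is exactly what one would expect.
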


\noindent
Theorem\,\ref{thm:astolfi-1}  leads a new   notion of model achieving moment matching. The system~\eqref{eq:system-rom} is a \emph{model of system~\eqref{eq:system-linear} at $(S,L)$}, with   ${S \in \R^{\nu\times \nu}}$   a non-derogatory matrix such that ${\spectrum{A} \cap \spectrum{S} = \emptyset}$, if ${\spectrum{F} 
\cap \spectrum{S} = \emptyset}$
and 
\beq \label{eq:model-condition-2}
C\Pi = HP,
\eeq
where ${\Pi \in \R^{ n \times \nu}}$ is the  solution of the Sylvester equation
\eqref{eq:Sylvester-equation-astolfi}, ${L \in \R^{1 \times \nu}}$ is such that the pair $(S,L)$ is observable, and ${P \in \R^{ r \times \nu}}$ is the  solution of the Sylvester equation
\beq \label{eq:Sylvester-equation-astolfi-model} 
FP +GL= P S.
\eeq
In this case, system~\eqref{eq:system-rom} is said to \emph{match the moment of system~\eqref{eq:system-linear} (or to achieve moment matching) at $(S,L)$}. Furthermore, system~\eqref{eq:system-rom} is a \emph{reduced order model of system~\eqref{eq:system-linear} at $(S,L)$} if ${r<n}$.

A family of  models  achieving moment matching for system~\eqref{eq:system-linear} has been defined in~\citep{astolfi2010model} by selecting $r=\nu$ and ${P=I}$ in~\eqref{eq:model-condition-2} and~\eqref{eq:Sylvester-equation-astolfi-model}, respectively.  This   yields a family of  models  at $(S,L)$ given  by~\eqref{eq:system-rom},  with
\beq \label{eq:family-linear-astolfi}
F=S-\Delta L, 
\quad G=\Delta, 
\quad  H = C \Pi,
\eeq   
with ${\Delta \in \R^{r \times 1}}$  such that ${\spectrum{S-\Delta L} \cap \spectrum{S}  = \emptyset}$. As discussed in~\citep{astolfi2010model}, the parameter ${\Delta}$ can be used to enforce prescribed properties, including stability, passivity, and a given $L_2$ gain.


\subsection{Moment matching via Krylov projections}

Moment matching via Krylov  projections is a numerically efficient way to construct models achieving moment matching~\citep[Chapter 11]{antoulas2005approximation}. Given system~\eqref{eq:system-linear}, the main idea is to define a model~\eqref{eq:system-rom}, by projection, as
\beq \label{eq:family-linear-Krylov}
F = P A Q,
\quad G=P B, 
\quad  H = C Q ,
\eeq
where ${P \in \R^{r \times n}}$ and ${Q \in \R^{n \times r}}$ are (biorthogonal Petrov-Galerkin) projectors such that $PQ=I$ and the interpolation conditions~\eqref{eq:matching-condition-linear} hold~\citep[Chapter 11]{antoulas2005approximation}.  The key tool to construct the projectors $P$ and $Q$ is the notion of \emph{Krylov subspace}. The {Krylov subspace} associated with a matrix ${M \in \Co^{n \times n}}$, a vector ${v\in\Co^n}$, a point ${s^{\star} \in \Co \cup \{ \infty\}}$, and a positive integer  $j>0$, is defined, for ${s^{\star} = \infty}$, as 
\beq \nn 
\mathcal{K}_j(M,v; s^{\star}) =     \Span \{ \, v , \,  \ldots , \, M^{j-1} v \,\}      , 
\eeq 
and,  for ${s^{\star} \not = \infty}$, as
\beq \nn
\mathcal{K}_j(M,v; s^{\star}) =      \Span \{  (s^{\star}I-M)^{-1} v  ,\, \ldots ,\, (s^{\star}I-M)^{-j} v \}     .
\eeq
The projectors are selected by ensuring that the image of $P^{\transpose}$ and $Q$ span the union of certain Krylov subspaces.

\begin{theorem}~\citep{grimme1997krylov} \label{thm:Krylov}
Consider system~\eqref{eq:system-linear}, 
 distinct interpolation points ${\{s_i\}_{i=1}^N}$, with 
${s_i \in \Co}$ and ${s_i \not \in  \spectrum{A}}$, and orders of interpolation ${\{k_i\}_{i=1}^N}$, with ${k_i\in \Zplus}$. Define the model~\eqref{eq:system-rom} as in~\eqref{eq:family-linear-Krylov}, where ${P \in \R^{r \times n}}$ and ${Q \in \R^{n \times r}}$ are such that ${PQ=I}$ and  ${\Image P^{\transpose} \supseteq \bigcup_{j =1}^{N} \mathcal{K}_{k_j}(A,B; s_j)}$. 
Then the model~\eqref{eq:system-rom} achieves moment matching at $\{ s_i\}^{N}_{i=1}$.
\end{theorem}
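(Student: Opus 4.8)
The plan is to work directly with the resolvent-expansion description of moments rather than the Sylvester or steady-state machinery. Fix an interpolation point $s_i$; since $s_i\notin\spectrum{A}$ we may expand the resolvent around it, $(sI-A)^{-1}=\sum_{\ell\ge 0}(s_i-s)^{\ell}(s_iI-A)^{-(\ell+1)}$ for $s$ near $s_i$, and reading off the coefficient of $(s_i-s)^{j}$ in $W(s)=C(sI-A)^{-1}B$ and comparing with $\eta_j(s_i)=\tfrac{(-1)^j}{j!}W^{(j)}(s_i)$ gives the clean identity $\eta_j(s_i)=C(s_iI-A)^{-(j+1)}B$. The identical computation applied to $\hat W$ yields $\hat\eta_j(s_i)=H(s_iI-F)^{-(j+1)}G$ (valid once $s_i\notin\spectrum{F}$). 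Thus the moments at $s_i$ are exactly the images under $C$, respectively $H$, of the vectors that generate the Krylov subspaces $\mathcal{K}_{\bullet}(A,B;s_i)$ and $\mathcal{K}_{\bullet}(F,G;s_i)$, and the theorem is reduced to showing that the projection matches these generators up to applying $C$.

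The key object is the oblique projector $\Pi:=QP$: from $PQ=I$ it is idempotent, $\Image\Pi=\Image Q$, and $\Pi v=v$ for every $v\in\Image Q$. Write $r_j:=(s_iI-A)^{-(j+1)}B$ and $\hat r_j:=(s_iI-F)^{-(j+1)}G$. The crux is the claim, proved by induction on $j$: \emph{if $r_0,\ldots,r_j\in\Image Q$, then $\hat r_j=Pr_j$} (equivalently $Q\hat r_j=r_j$). For the base case, apply $P$ to $(s_iI-A)r_0=B$; using $r_0=QPr_0$ to rewrite $PAr_0=(PAQ)(Pr_0)=FPr_0$ gives $(s_iI-F)(Pr_0)=PB=G$, hence $Pr_0=\hat r_0$. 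The inductive step repeats the move: apply $P$ to $(s_iI-A)r_j=r_{j-1}$, use $r_j=QPr_j$ to replace $PAr_j$ by $FPr_j$, and insert the inductive hypothesis $Pr_{j-1}=\hat r_{j-1}$ to obtain $(s_iI-F)(Pr_j)=\hat r_{j-1}$, i.e.\ $Pr_j=(s_iI-F)^{-1}\hat r_{j-1}=\hat r_j$. The hypothesis on the projector guarantees that $r_0,\ldots,r_{k_i}$ lie in its range, so the claim applies for all $j\le k_i$.

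Assembling the pieces, for each $i$ and each $j\in\{0,\ldots,k_i\}$ one gets $\hat\eta_j(s_i)=H\hat r_j=(CQ)(Pr_j)=C\Pi r_j=Cr_j=\eta_j(s_i)$, where the last equality uses $\Pi r_j=r_j$. The induction itself is short, so I expect the real care to lie in the surrounding hypotheses: one must guarantee $s_i\notin\spectrum{F}$ so that $\hat\eta_j(s_i)$ is defined and the resolvent algebra is legitimate — this is generic and can be obtained from $s_i\notin\spectrum{A}$ together with the Krylov inclusion, or simply assumed — and one must keep the bookkeeping honest, namely that matching the $k_i+1$ moments $\eta_0(s_i),\ldots,\eta_{k_i}(s_i)$ consumes the $k_i+1$ generators $r_0,\ldots,r_{k_i}$, whose span must sit inside $\Image Q$ (the projector through which $H=CQ$ and $G=PB$ are formed), this being precisely what produces matching ``from the input side''.
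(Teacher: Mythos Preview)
The paper does not provide its own proof of this theorem; it is stated with attribution to \citep{grimme1997krylov} and used as background. Your argument is essentially the classical one found in Grimme's thesis and in \citep[Chapter~11]{antoulas2005approximation}: write $\eta_j(s_i)=C(s_iI-A)^{-(j+1)}B$, form the oblique projector $QP$, and propagate the identity $\hat r_j=Pr_j$ by induction through the recursion $(s_iI-A)r_j=r_{j-1}$. The logic is sound and standard.

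One point, however, deserves to be made explicit rather than absorbed into the final paragraph. Your induction hinges on $r_0,\ldots,r_{k_i}\in\Image Q$, so that $r_j=QPr_j$ and hence $PAr_j=(PAQ)Pr_j=FPr_j$. You identify this correctly at the end (``whose span must sit inside $\Image Q$''). But the theorem \emph{as stated in the paper} imposes the Krylov inclusion on $\Image P^{\transpose}$, not on $\Image Q$. These subspaces coincide only in the orthogonal-projection case; in general $r_j\in\Image P^{\transpose}$ does \emph{not} give $QPr_j=r_j$, and the inductive step collapses. This is almost certainly a typo in the paper --- the classical one-sided result places the input Krylov subspace $\mathcal{K}_{\bullet}(A,B;s_i)$ in the range of the right projector $Q$, while $\Image P^{\transpose}$ is reserved for the output Krylov subspace $\mathcal{K}_{\bullet}(A^{\transpose},C^{\transpose};s_i)$ --- and your proof is the right proof for the right statement. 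Still, you should flag the discrepancy openly instead of silently substituting the hypothesis you need.

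Your remark that $s_i\notin\spectrum{F}$ must be assumed (or obtained generically) is appropriate; the cited literature treats it the same way.
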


\section{Least squares moment matching}

Following~\citep{padoan2021model,padoan2023model}
models achieving least squares moment matching can be characterized, under certain assumptions, in terms of the solutions of the constrained optimization problem 
\beq \label{eq:optimization-problem}
\begin{array}{ll}
    \mbox{minimize}      & \norm{C \Pi - H P }_{2*}^2 \\
    \mbox{subject to}    & FP +GL= P S , \\
    						& \spectrum{S} \cap \spectrum{F} = \emptyset,
\end{array}
\eeq
where  ${F\in\R^{r \times r}}$, ${G\in\R^{r \times 1}}$, ${H\in\R^{1\times r}}$ and ${P\in\R^{r\times \nu}}$   are the optimization variables, and systems~\eqref{eq:system-linear} and~\eqref{eq:system-signal-generator}  
are the problem data. 
 
\begin{theorem} \label{thm:moments-optimization-linear}
Consider system~\eqref{eq:system-linear} and the signal generator~\eqref{eq:system-signal-generator}. Suppose Assumptions~\ref{ass:minimality} and~\ref{ass:signal-generator-linear} hold. Assume ${S+S^{\transpose} = 0}$.
Then the model~\eqref{eq:system-rom} achieves least squares moment matching at $\spectrum{S}$ if and only if there exists ${P\in\R^{r\times \nu}}$ such that $(F,G,H,P)$ is a solution of the optimization problem~\eqref{eq:optimization-problem}. 
\end{theorem}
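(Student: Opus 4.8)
The strategy is to connect the least-squares objective $\mathcal{J}$ in~\eqref{eq:index} to the matrix-valued objective $\norm{C\Pi - HP}_{2*}^2$ in~\eqref{eq:optimization-problem} via the coordinate change $T$ supplied by Lemma~\ref{lemma:astolfi-3}. First I would invoke Lemma~\ref{lemma:astolfi-3} to write the row vector of moments of~\eqref{eq:system-linear} at $\spectrum{S}$ as $C\Pi T$, where $\Pi$ solves the Sylvester equation~\eqref{eq:Sylvester-equation-astolfi}. The key observation is that the \emph{same} matrix $T$ and the \emph{same} observability structure apply to any candidate model~\eqref{eq:system-rom} with $\spectrum{F}\cap\spectrum{S}=\emptyset$: letting $P$ be the unique solution of the model Sylvester equation $FP+GL=PS$ (which exists and is unique precisely because $\spectrum{F}\cap\spectrum{S}=\emptyset$), Lemma~\ref{lemma:astolfi-3} applied to $(F,G,H)$ in place of $(A,B,C)$ gives $HPT$ as the row vector of the model's moments $\hat\eta_j(s_i)$ at the same interpolation data. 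Hence $\mathcal{J} = \norm{(C\Pi - HP)T}_2^2$.

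The second step is to eliminate the weighting by $T$. Here I would use the hypothesis $S + S^{\transpose} = 0$: when $S$ is skew-symmetric, the interpolation points $s_i$ lie on the imaginary axis and come in conjugate pairs, and the natural coordinates in which the Sylvester solutions are expressed are (up to scaling) an orthonormal-type basis, so that the matrix $T$ can be taken to satisfy $TT^{\transpose} = cI$ for some constant $c>0$ — equivalently, $\norm{MT}_2^2 = c\,\norm{M}_2^2$ for every row vector $M$. More carefully, one shows that with $S$ skew-symmetric one may choose $L$ and the realization of the signal generator so that the transformation in Lemma~\ref{lemma:astolfi-3} is a similarity that preserves the $2$-norm (or at least preserves it up to a uniform positive scalar, which does not affect the $\argmin$). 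Since the dual of the Euclidean $2$-norm is again the Euclidean $2$-norm, $\norm{C\Pi - HP}_{2*}^2 = \norm{C\Pi - HP}_2^2$, and therefore minimizing $\mathcal{J}$ over all models~\eqref{eq:system-rom} with $\spectrum{F}\cap\spectrum{S}=\emptyset$ is equivalent to minimizing $\norm{C\Pi - HP}_{2*}^2$ subject to the constraints in~\eqref{eq:optimization-problem}.

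The final step is bookkeeping of the equivalence of feasible sets and of "achieving least squares moment matching." In one direction, given a model~\eqref{eq:system-rom} that minimizes $\mathcal{J}$, the associated $P$ (solving $FP+GL=PS$) makes $(F,G,H,P)$ feasible for~\eqref{eq:optimization-problem}, and by the norm identity above it attains the minimum of the matrix objective; conversely, any optimal $(F,G,H,P)$ for~\eqref{eq:optimization-problem} yields a model whose $\mathcal{J}$ equals (up to the constant $c$) the optimal value, hence is minimal. One must also note that the constraint $\spectrum{S}\cap\spectrum{F}=\emptyset$ in~\eqref{eq:optimization-problem} is exactly the condition under which the moments $\hat\eta_j(s_i)$ are well-defined, so no feasible models are lost or spuriously added.

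The main obstacle I anticipate is the second step: pinning down precisely why the change-of-basis matrix $T$ from Lemma~\ref{lemma:astolfi-3} is an isometry (up to scalar) exactly when $S+S^{\transpose}=0$. This requires being explicit about the realization $(S,L)$ — for instance taking $S$ in a real block-diagonal form with $2\times 2$ rotation blocks and $L$ chosen compatibly — and computing the generalized Vandermonde/observability structure that relates the coordinates $\omega$ to the moments. The skew-symmetry is what forces the relevant Gramian to be (a multiple of) the identity; without it the objective would pick up a nontrivial positive-definite weighting $W = TT^{\transpose}$ and one would only recover a \emph{weighted} least-squares problem. I would isolate this as a preliminary lemma about $(S,L)$ with $S$ skew-symmetric, prove it by the explicit block computation, and then the rest of the argument is the routine chain of identities sketched above.
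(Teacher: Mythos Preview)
The paper does not supply its own proof of Theorem~\ref{thm:moments-optimization-linear}; the result is quoted from \citep{padoan2021model,padoan2023model} and used as a black box, so there is no in-paper argument to compare your proposal against.

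Judged on its own, your plan has the right architecture: apply Lemma~\ref{lemma:astolfi-3} once to $(A,B,C)$ and once to $(F,G,H)$ to obtain $\mathcal{J}=\norm{(C\Pi-HP)T}_2^2$, then use $S+S^{\transpose}=0$ to strip the weight $T$, and finally match feasible sets. The third step is routine and correct. The obstacle you single out is the real crux, and it is sharper than you suggest. Diagonalize $S=UDU^{*}$ with $U$ unitary (possible since $S$ is real skew-symmetric, hence normal, with simple spectrum by non-derogatoriness). Column-wise the Sylvester equations give $C\Pi U=[\,W(s_i)(LU)_i\,]$ and $HPU=[\,\hat W(s_i)(LU)_i\,]$, so
\[
\norm{C\Pi-HP}_2^2=\sum_i |(LU)_i|^2\,\bigl|W(s_i)-\hat W(s_i)\bigr|^2 .
\]
For a generic observable $L$ the weights $|(LU)_i|^2$ are \emph{not} equal (e.g.\ $S=\blockdiag\!\left(\left[\begin{smallmatrix}0&1\\-1&0\end{smallmatrix}\right],\left[\begin{smallmatrix}0&2\\-2&0\end{smallmatrix}\right]\right)$ with $L=[\,1\ 0\ 2\ 0\,]$), so the matrix objective is in general a \emph{weighted} version of $\mathcal{J}$. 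Your sentence ``one may choose $L$ and the realization of the signal generator so that $T$ is an isometry'' is exactly where the gap lies: $L$ is fixed by Assumption~\ref{ass:signal-generator-linear}, not at your disposal. To close the argument you must either (a) verify that \citep{padoan2021model,padoan2023model} impose an additional normalization on $(S,L)$ that forces the weights to coincide, or (b) exploit the invariance of both the feasible set and $\norm{C\Pi-HP}_2^2$ under orthogonal changes of coordinates $(S,L)\mapsto(V^{\transpose}SV,LV)$ and argue that the statement is meant up to such a choice. Either way, isolate this as an explicit lemma with the block computation; the rest of your outline goes through.
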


\noindent
Theorem~\ref{thm:moments-optimization-linear} bears a number of interesting consequences. 
The connection between least squares moment matching and the optimization problem~\eqref{eq:optimization-problem} is instrumental to \textit{define} a least squares moment matching  for nonlinear systems~\citep{padoan2023model}. Furthermore, 
it allows one to characterize least squares moment matching as minimizing the worst case r.m.s. gain of  an error system~\citep[Theorem 3]{padoan2021model}. Finally, it also has implications for model reduction by least squares moment matching, as discussed next.

\subsection{Models achieving least squares moment matching} 

A family of models achieving least squares moment matching with a natural system-theoretic interpretation is obtained by treating $P$ as a fixed \emph{parameter}~\citep{padoan2021model}.

The family of models in question is described by the equations~\eqref{eq:system-rom}, with 
\beq \label{eq:family-linear}
F=P(S-\Delta L)Q,
\quad G=P \Delta , 
\quad  H = C \Pi Q,
\eeq
with ${S\in \R^{\nu \times \nu}}$ and ${L\in\R^{1\times \nu}}$ such that Assumption~\ref{ass:signal-generator-linear}
holds, ${\Pi \in \R^{n \times \nu}}$ the solution of the Sylvester equation~\eqref{eq:Sylvester-equation-astolfi},
and ${P \in \R^{r \times \nu}}$, ${\Delta \in\R^{\nu \times 1}}$ and ${Q \in\R^{\nu \times r}}$  such that
\begin{itemize}
\item[(i)]  the matrix $P$ is full rank and ${\ker P\subset \ker C\Pi}$,
\item[(ii)] the subspace $\ker P$ is an $(S,\Delta)$-controlled invariant\footnote{A subspace ${\mathcal{V}\subset \R^{n}}$ is an \textit{$(A, B)$-controlled invariant} if ${A\mathcal{V} \subset \mathcal{V} + \Span(B)}$~\citep[p.199]{basile1992controlled}.},
\item[(iii)]  ${PQ =I}$ and ${\spectrum{S} \cap \spectrum{P(S-\Delta L)Q} = \emptyset}$,
\end{itemize}
in which case $P$, $\Delta$ and $Q$ are said to be \emph{admissible}.

The main feature of the family of models~\eqref{eq:family-linear} is that it provides a solution to the least squares model reduction problem, as detailed by the following statement.  

\begin{theorem} \label{thm:1}
Consider system \eqref{eq:system-linear} and the family of models defined by~\eqref{eq:family-linear}. Suppose Assumptions~\ref{ass:minimality} and~\ref{ass:signal-generator-linear} hold. Assume ${S+S^{\transpose} = 0}$. Let ${P\in\R^{r\times \nu}}$,  ${Q\in\R^{\nu\times r}}$ and ${\Delta\in\R^{\nu \times 1}}$ be admissible for the parameterization \eqref{eq:family-linear}. Then the model \eqref{eq:family-linear} achieves least squares moment matching at $\spectrum{S}$.
\end{theorem}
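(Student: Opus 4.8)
The plan is to deduce the claim from Theorem~\ref{thm:moments-optimization-linear}. Since $S+S^{\transpose}=0$, that theorem reduces ``the model~\eqref{eq:family-linear} achieves least squares moment matching at $\spectrum{S}$'' to the existence of $\bar{P}\in\R^{r\times\nu}$ such that $(F,G,H,\bar{P})$ is a global minimizer of the optimization problem~\eqref{eq:optimization-problem}. So I would (a) pin down the right $\bar{P}$, (b) verify feasibility, i.e. $F\bar{P}+GL=\bar{P}S$ and $\spectrum{S}\cap\spectrum{F}=\emptyset$, and (c) verify that $\norm{C\Pi-H\bar{P}}_{2*}^2$ equals the optimal value. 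It helps to read~\eqref{eq:family-linear} through the two-stage lens: the full-order surrogate $(S-\Delta L,\Delta,C\Pi)$ is the member of the family~\eqref{eq:family-linear-astolfi} with $P=I$ — whose associated Sylvester solution~\eqref{eq:Sylvester-equation-astolfi} is the identity, so it matches all $\nu$ moments exactly — and~\eqref{eq:family-linear} is its Petrov--Galerkin compression by $P$ and $Q$, since $F=P(S-\Delta L)Q$, $G=P\Delta$, $H=C\Pi Q$.

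For (b), the spectral condition is exactly admissibility item (iii); it also makes the reduced Sylvester equation $F\bar{P}+GL=\bar{P}S$ uniquely solvable, so I take $\bar{P}$ to be that unique solution, and the constraint then holds by construction. The substantive computation is to describe $\bar{P}$ in closed form. Using $PQ=I$ one has $sI-F=P\big(sI-(S-\Delta L)\big)Q$, and $QP$ is the idempotent with image $\Image Q$ and kernel $\ker P$; substituting into $F\bar{P}+GL-\bar{P}S$ and collecting terms shows that the only obstruction to $\bar{P}$ coinciding with $P$ (equivalently, to $C\Pi-H\bar{P}$ being expressible purely through $C\Pi$ and $QP$) is the action of $S-\Delta L$ on $\ker P$. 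Admissibility item (ii) — $\ker P$ is $(S,\Delta)$-controlled invariant, with $L$ acting as a friend — is exactly what controls that action, while item (i), $\ker P\subseteq\ker C\Pi$, is what turns the residual into the part of the surrogate moment vector $C\Pi$ not captured by the compression.

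Step (c) is where I expect the main difficulty. Because $H$ appears only in the objective of~\eqref{eq:optimization-problem} and is otherwise free, for every feasible $P'$ the inner minimization over $H'$ gives $\min_{H'}\norm{C\Pi-H'P'}_{2*}^2=\norm{C\Pi\,R_{P'}}_{2*}^2$, with $R_{P'}$ the orthogonal projector onto $\ker P'$ (for the inner product defining $\norm{\cdot}_{2*}$); hence the optimal value is $\min\{\norm{C\Pi\,R_{P'}}_{2*}^2:P'\ \text{feasible}\}$. I would then (1) characterize the subspaces arising as $\ker P'$ for a feasible $P'$ — feasibility of $F'P'+G'L=P'S$ with $\spectrum{S}\cap\spectrum{F'}=\emptyset$ forces $\ker P'$ to be controlled invariant in the same sense as item (ii), so the admissibility list describes precisely the feasible subspaces — and (2) show that the subspace $\ker\bar{P}$ produced by~\eqref{eq:family-linear} minimizes $\norm{C\Pi\,R_{\mathcal V}}_{2*}^2$ over that family, with item (i) forcing $C\Pi$ to be annihilated as far as feasibility permits. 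The two delicate points I anticipate are matching the feasible set of~\eqref{eq:optimization-problem} exactly with items (i)--(iii), and the skew-symmetry bookkeeping that lets ``minimize $\norm{C\Pi-HP}_{2*}^2$'' stand in for ``minimize the index~\eqref{eq:index}'' — namely that the change of basis $T$ of Lemma~\ref{lemma:astolfi-3} is orthogonal when $S+S^{\transpose}=0$. Granting these, the optimality inequality is immediate from the orthogonal-projection formula together with the inclusion supplied by item (i).
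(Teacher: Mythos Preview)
The paper does not supply a proof of Theorem~\ref{thm:1}; the result is quoted from the author's earlier work~\citep{padoan2021model,padoan2023model}, so there is no in-paper argument to compare your proposal against. That said, your overall strategy --- invoke Theorem~\ref{thm:moments-optimization-linear} and exhibit a feasible $\bar P$ at which the objective of~\eqref{eq:optimization-problem} attains its minimum --- is the natural one, and your two-stage reading of~\eqref{eq:family-linear} through the surrogate~\eqref{eq:system-rom-surrogate} is exactly the interpretation the paper develops in Section~\ref{sec:system-theoretic-interpretation}.

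Two remarks on the proposal itself. First, step~(c) is simpler than you make it: admissibility item~(i), $\ker P\subset\ker C\Pi$, already says that the row vector $C\Pi$ lies in the row space of $P$, so with $H=C\Pi Q$ one has $C\Pi-HP=C\Pi(I-QP)=0$. Hence, if the parameter $P$ is itself feasible for~\eqref{eq:optimization-problem}, the objective value is zero and global optimality is immediate; the minimization over controlled-invariant subspaces you sketch is unnecessary. Second --- and this is the real gap --- feasibility of $P$ means $FP+GL=PS$, which by your own computation reduces to $(S-\Delta L)\ker P\subset\ker P$. Item~(ii) only asserts that $\ker P$ is $(S,\Delta)$-controlled invariant, i.e.\ that \emph{some} friend exists; it does not say that $L$ is one. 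Your parenthetical ``with $L$ acting as a friend'' is precisely the unjustified step, and neither the stated admissibility conditions nor the hypothesis $S+S^{\transpose}=0$ supplies it. Either the admissibility conditions in the source references are slightly sharper (requiring $(S-\Delta L)$-invariance of $\ker P$ rather than mere controlled invariance), or an additional argument is needed here; without it you cannot conclude $\bar P=P$, and since $\bar P$ is the \emph{unique} solution of the reduced Sylvester equation, the identity $C\Pi-H\bar P=0$ --- and hence the whole optimality claim --- does not close.
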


\noindent
Theorem~\ref{thm:1} establishes that the  family of models~\eqref{eq:family-linear} achieves least squares moment matching (at ${\spectrum{S}}$).  The parameterization~\eqref{eq:family-linear} admits a natural system-theoretic interpretation in terms of a two-step model reduction process and allows one to relate parameters to properties of the model, as discussed in detail in the next section.

\section{A system-theoretic interpretation} \label{sec:system-theoretic-interpretation}

The family of models~\eqref{eq:family-linear} can be interpreted in system-theoretic terms as a two-step model reduction process which comprises the following basic steps. 
\begin{itemize}
\item[(I)] The first step consists in constructing a model which incorporates \emph{all} interpolation constraints~\eqref{eq:matching-condition-linear}. This leads to a model of order $\nu$, with $\nu > 2r$. The model is obtained using the parameterization~\eqref{eq:family-linear-astolfi} and is described by the matrices
\beq \label{eq:system-rom-surrogate}
\bar F= S-\Delta L ,
\quad \bar  G=\Delta , 
\quad  \bar  H = C \Pi , 
\eeq
with ${S\in \R^{\nu \times \nu}}$ and ${L\in\R^{1\times \nu}}$ such that Assumption~\ref{ass:signal-generator-linear}
holds, ${\Pi \in \R^{n \times \nu}}$ the solution of the Sylvester equation~\eqref{eq:Sylvester-equation-astolfi}, and ${\Delta \in \R^{\nu\times 1}}$ an admissible free parameter, respectively. 
\item[(II)] The second step consists in constructing a reduced order model of~\eqref{eq:system-rom-surrogate} through a (biorthogonal Petrov-Galerkin) projection, defined as
\beq \label{eq:system-rom-surrogate-reduced}
F=P \bar F Q,
\quad G=P \bar G, 
\quad  H = \bar H Q ,
\eeq
in which ${P \in \R^{r \times \nu}}$ and ${Q \in\R^{\nu \times r}}$  are admissible free parameters.
\end{itemize}
Note that~\eqref{eq:system-rom-surrogate} and~\eqref{eq:system-rom-surrogate-reduced} together yield~\eqref{eq:family-linear}, which shows that the family of models~\eqref{eq:family-linear} can be indeed described  as a two-step model reduction process. 
According to this interpretation, the family of models~\eqref{eq:family-linear} is obtained by approximation of an auxiliary, large model~\eqref{eq:system-rom-surrogate}  which takes into account \emph{all} interpolation constraints~\eqref{eq:matching-condition-linear}. For this reason,~\eqref{eq:system-rom-surrogate} is referred to as a \emph{surrogate model}. Fig.~\ref{fig:least-squares-moment-matching-linear-1} provides a diagrammatic illustration of  of least squares moment matching via surrogate models.

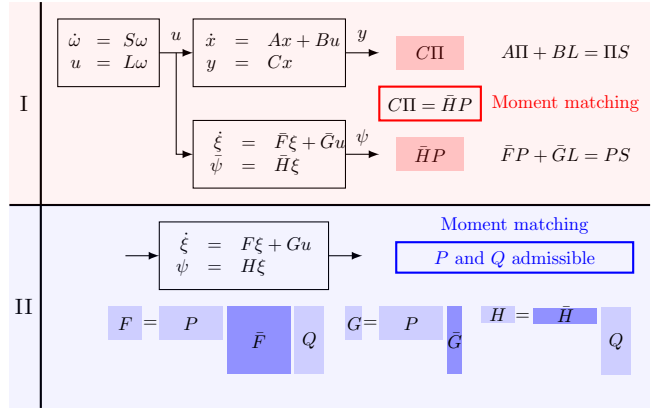
\begin{figure}[t!]
\centering
\begin{tikzpicture}[scale=0.45, every node/.style={scale=0.7}]


\node at (-2.5,3.5) {$\begin{array}{rcl}
\dot{ \omega} \!\!& = & \!\!  S  \omega\\
u \!\!& = & \!\!  L  \omega
\end{array}$
};
\node at (-0.5,4) {$u$};
\node at (2.3572,3.5) {
$\begin{array}{rcl}
\dot{x} & = & Ax+Bu  \\ y & = & Cx
\end{array}$};

\node at (5,4) {$y$};

\draw  (-4,4.5) rectangle (-1,2.5);
\draw  (0,4.5) rectangle (4.5,2.5);

\draw [-latex](-1,3.5) -- (0,3.5);
\draw [-latex](4.5,3.5) -- (5.5,3.5);


\node at (2.5,0.5) {
$\begin{array}{rcl}
\dot{\xi} & = & \bar{F}\xi+\bar{G}u  \\ \bar{\psi} & = & \bar{H}\xi
\end{array}$};

\node at (5,1) {$ \psi$};

\draw  (0,1.5) rectangle (4.5,-0.5);


\draw[red,thick]  (5.5,2.5) rectangle (8.5,1.5);
\node at (7,2) {$C\Pi = \bar{H}P$};
\node at (11,2) {\color{red} Moment matching};
 
\node at (7,3.5) {$C\Pi$};
\node at (11,3.5) {$A\Pi+BL=\Pi S$};
\fill[opacity=0.2,red]  (6,4) rectangle (8,3);
\node at (7,0.5) {$ \bar{H}P$};
\node at (11,0.5) {$ \bar{F}P+ \bar{G}L=P S$};
\fill[opacity=0.2,red]  (6,1) rectangle (8,0);

\draw [-latex](-0.5,3.5) -- (-0.5,0.5) -- (0,0.5);
\draw [-latex](4.5,0.5) -- (5.5,0.5);

\node at (-5,2) {\large  \textsc{I}};
\node at (-5,-4) {\large  \textsc{II}};

\node at (1.5,-2.5) {
$\begin{array}{rcl}
\dot{\xi} & = & F\xi+Gu  \\ \psi & = & H\xi
\end{array}$
}; 
\draw  (-1,-1.5) rectangle (4,-3.5);

\draw [-latex](-2,-2.5) -- (-1,-2.5);
\draw [-latex](4,-2.5) -- (5,-2.5);

\draw [ thick](-5.4284,-1) -- (13.5,-1);
\draw [ thick](-4.5,-7) -- (-4.5,5);

\node at (9.5,-1.6) {\color{blue} Moment matching};
\node at (9.5,-2.6) {\color{blue} $P$ and $Q$ admissible};


\fill[opacity=0.05,blue]  (-5.4284,-1) rectangle (13.5,-7);
\fill[opacity=0.05,red]  (-5.4284,-1) rectangle (13.5,5);


\fill[opacity=0.15,blue]  (-2.5,-4) rectangle (-1.5,-5);
\node at (-2,-4.5) {$F$};
\node at (-1.2668,-4.5) {$=$};
\fill[opacity=0.15,blue]  (-1,-4) rectangle (0.8666,-5);
\node at (-0.0667,-4.5) {$P$};
\fill[opacity=0.4,blue]  (1,-4) rectangle (2.8666,-6);
\node at (1.9333,-5) {$\bar F$};
\fill[opacity=0.15,blue]  (3,-4) rectangle (3.8666,-6);
\node at (3.4333,-5) {$Q$};
\fill[opacity=0.15,blue]  (4.5,-4) rectangle (5,-5);
\node at (4.7668,-4.5) {$G$};
\node at (5.2332,-4.5) {$=$};
\fill[opacity=0.15,blue]  (5.5,-4) rectangle (7.3666,-5);
\node at (6.4333,-4.5) {$P$};
\fill[opacity=0.4,blue]  (7.5,-4) rectangle (7.9333,-6);
\node at (7.7332,-5) {$\bar G$};
\fill[opacity=0.15,blue]  (8.5,-4) rectangle (9.5,-4.5);
\node at (9,-4.2332) {$H$};
\node at (9.7332,-4.2999) {$=$};

\fill[opacity=0.4,blue]  (10.0667,-4.0336) rectangle (11.9333,-4.5);
\node at (11,-4.2332) {$\bar H$};
\fill[opacity=0.15,blue]  (12.0667,-4.0336) rectangle (12.9333,-6.0336);
\node at (12.5,-5.0336) {$Q$};

\draw [blue, thick] (6,-2.1) rectangle (13,-3);
\draw [dotted, thick](-1.795,-6.1);

\node at (-5.5,5) {};
\node at (14,5) {};
\node at (14,-7) {};
\node at (-5.5,-7) {};

\end{tikzpicture}
\centering
\caption{{Least squares model reduction by moment matching as a two-step process using surrogate models.}}
\label{fig:least-squares-moment-matching-linear-1}
\end{figure}%
 
An alternative interpretation of the family of models~\eqref{eq:family-linear}  can be given in terms of a ``dual'' two-step model reduction process, which makes use of an auxiliary signal generator.
\begin{itemize}
\item[(I)] The first step consists in constructing reduced order model of the signal generator~\eqref{eq:system-signal-generator} through a (biorthogonal Petrov-Galerkin) projection. This step overcomes the issue of taking into account all interpolation constraints~\eqref{eq:matching-condition-linear} by approximating the signal generator~\eqref{eq:system-signal-generator}  with  a ``reduced'' signal generator of order $r$,  with ${r<\lfloor \nu / 2 \rfloor}$. The ``reduced'' signal generator in question is described by the matrices
\beq \label{eq:system-signal-generator-surrogate}
\bar S= P S Q ,
\quad \bar  L=  L Q ,
\eeq
with ${S\in \R^{\nu \times \nu}}$ and ${L\in\R^{1\times \nu}}$ such that Assumption~\ref{ass:signal-generator-linear}
holds, ${P \in \R^{r \times \nu}}$  and $Q \in \R^{\nu \times r}$ are admissible free parameters, respectively.

\item[(II)] The second step consists in building a model of the ``reduced'' signal generator using the parameterization~\eqref{eq:family-linear-astolfi}, defined as
\beq \label{eq:system-signal-generator-surrogate-reduced}
F = \bar  S-  \bar \Delta \bar L ,
\quad G=\bar  \Delta , 
\quad  \bar  H = C \bar \Pi .
\eeq 
where ${\bar \Pi \in \R^{n \times r }}$ is the solution of the ``reduced'' Sylvester equation
\beq \label{eq:Sylvester-equation-surrogate}
A\bar\Pi +B \bar L=\bar \Pi \bar S,
\eeq
and $ {\Delta \in \R^{r \times 1}}$ such that ${\spectrum{\bar S} \cap \spectrum{\bar  S-  \bar \Delta \bar L } = \emptyset}$. Since $$\rank P = r,$$ the vector $\bar \Delta$ 
can be written  as 
$${\bar \Delta = P \Delta},$$ 
which shows that the family~\eqref{eq:system-signal-generator-surrogate-reduced} can be equivalently defined as
\beq \label{eq:system-signal-generator-surrogate-reduced-equivalent}
F = \bar  S-  P \Delta \bar L ,
\quad G=P \Delta, 
\quad  \bar  H = C \bar \Pi .
\eeq 
\end{itemize}
Note that~\eqref{eq:system-signal-generator-surrogate} and~\eqref{eq:system-signal-generator-surrogate-reduced} together yield~\eqref{eq:family-linear}, which shows that the family of models~\eqref{eq:family-linear} can be indeed described  as a two-step model reduction process. 
According to this interpretation, the family of models~\eqref{eq:family-linear} is obtained by approximation of an auxiliary (possibly large) signal generator~\eqref{eq:system-signal-generator-surrogate} which is then used to construct a reduced order model. For this reason,~\eqref{eq:system-signal-generator-surrogate} is referred to as a \emph{surrogate signal generator}. Fig.~\ref{fig:least-squares-moment-matching-linear-2} provides a diagrammatic illustration of least squares moment matching via surrogate signal generators.

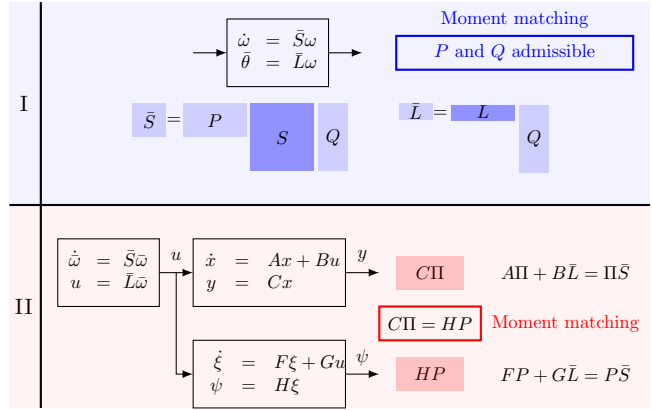
\begin{figure}[t!]
\centering
\begin{tikzpicture}[scale=0.45, every node/.style={scale=0.7}]


\node at (-2.5,-3) {$\begin{array}{rcl}
\dot{\bar \omega} \!\!& = & \!\! \bar S \bar \omega\\
u \!\!& = & \!\! \bar L \bar \omega
\end{array}$
};
\node at (-0.5,-2.5) {$u$};
\node at (2.3572,-3) {
$\begin{array}{rcl}
\dot{x} & = & Ax+Bu  \\ y & = & Cx
\end{array}$};

\node at (5,-2.5) {$y$};

\draw  (-4,-2) rectangle (-1,-4);
\draw  (0,-2) rectangle (4.5,-4);

\draw [-latex](-1,-3) -- (0,-3);
\draw [-latex](4.5,-3) -- (5.5,-3);


\node at (2.5,-6) {
$\begin{array}{rcl}
\dot{ \xi} & = &  F \xi+ Gu  \\ \psi & = &  H \xi
\end{array}$};

\node at (5,-5.5) {$ \psi$};

\draw  (0,-5) rectangle (4.5,-7);


\draw[red,thick]  (5.5,-4) rectangle (8.5,-5);
\node at (7,-4.5) {$C\Pi = HP$};
\node at (11,-4.5) {\color{red} Moment matching};
 
\node at (7,-3) {$C\Pi$};
\node at (11,-3) {$A\Pi+B \bar L=\Pi \bar S$};
\fill[opacity=0.2,red]  (6,-2.5) rectangle (8,-3.5);
\node at (7,-6) {$ HP$};
\node at (11,-6) {$ FP+ G \bar L=P \bar S$};
\fill[opacity=0.2,red]  (6,-5.5) rectangle (8,-6.5);

\draw [-latex](-0.5,-3) -- (-0.5,-6) -- (0,-6);
\draw [-latex](4.5,-6) -- (5.5,-6);

\node at (-5,2) {\large  \textsc{I}};
\node at (-5,-4) {\large  \textsc{II}};

\node at (2.5,3.5) {
$\begin{array}{rcl}
\dot{ \omega} \!\!& = & \!\! \bar S \omega\\
\bar \theta \!\!& = & \!\! \bar L \omega
\end{array}$
}; 
\draw  (1,4.5) rectangle (4,2.5);

\draw [-latex](0,3.5) -- (1,3.5);
\draw [-latex](4,3.5) -- (5,3.5);

\draw [ thick](-5.4284,-1) -- (13.5,-1);
\draw [ thick](-4.5,-7) -- (-4.5,5);

\node at (9.5,4.5) {\color{blue}  Moment matching};
\node at (9.5,3.5) {\color{blue} $P$ and $Q$  admissible};


\fill[opacity=0.05,red]  (-5.4284,-1) rectangle (13.5,-7);
\fill[opacity=0.05,blue]  (-5.4284,-1) rectangle (13.5,5);


\fill[opacity=0.15,blue]  (-1.795,2) rectangle (-0.795,1);
\node at (-1.295,1.5) {$\bar S$};
\node at (-0.5618,1.5) {$=$};
\fill[opacity=0.15,blue]  (-0.295,2) rectangle (1.5716,1);
\node at (0.6383,1.5) {$P$};
\fill[opacity=0.4,blue]  (1.705,2) rectangle (3.5716,0);
\node at (2.6383,1) {$S$};
\fill[opacity=0.15,blue]  (3.705,2) rectangle (4.5716,0);
\node at (4.1383,1) {$Q$};
\fill[opacity=0.15,blue]  (6.0716,2) rectangle (7.0716,1.5);
\node at (6.5716,1.7668) {$\bar L$};
\node at (7.3048,1.7001) {$=$};

\fill[opacity=0.4,blue]  (7.6383,1.9664) rectangle (9.5049,1.5);
\node at (8.5716,1.7668) {$ L$};
\fill[opacity=0.15,blue]  (9.6383,1.9664) rectangle (10.5049,-0.0336);
\node at (10.0716,0.9664) {$Q$};

\draw [blue, thick] (6,4) rectangle (13,3);
\draw [dotted, thick](-1.795,0);

\node at (-5.5,5) {};
\node at (14,5) {};
\node at (14,-7) {};
\node at (-5.5,-7) {};

\end{tikzpicture}
\centering
\caption{{Least squares model reduction by moment matching as a two-step process using surrogate signal generators.}}
\label{fig:least-squares-moment-matching-linear-2}
\end{figure}%

\subsection{Connections with the literature}

This section highlights connections between model reduction by least squares moment matching and existing methods based on moment matching and least squares. The family~\eqref{eq:family-linear} encompasses a broad class of models derived from these methods, as summarized in Table~\ref{table:connections}.


\begin{table*}[h!]
\centering
\renewcommand{\arraystretch}{1.5}
\resizebox{1\textwidth}{!}{%
\begin{tabular}{|c|c|c|c|}
\hline
\diagbox{\scalebox{0.75}{Parameter}}{\scalebox{0.75}{Method}} & Sylvester equations & Krylov projections & Smith--Lucas method \\ \hline
$\Pi$ & given & $I$ & given \\ \hline
$\Delta$ & $\spectrum{S} \cap \spectrum{S-\Delta L} = \emptyset$ & $B$ & $\chi_{\overbar S}(s)=s^{r}$ \\ \hline
$P$ & $I$ & $\ker P \subset \ker C$, $(S,L)$-invariant & $\chi_{\overbar S}(s)=s^{r}$ \\ \hline
$Q$ & $I$ & $PQ=I$ & $\chi_{\overbar S}(s)=s^{r}$ \\ \hline
$\nu$ & $r$ & $n$ & $r+q$ \\ \hline
\end{tabular}
}
\vspace{.15cm}
\caption{Connections between the parameterization~\eqref{eq:family-linear} and models obtained via existing moment matching and least squares moment matching methods.}
\label{table:connections}
\end{table*}

\subsection{Moment matching via Sylvester equations}
Any model of the form~\eqref{eq:family-linear-astolfi} is a special case of the family~\eqref{eq:family-linear}, where the surrogate model~\eqref{eq:system-signal-generator-surrogate} \textit{coincides} with the signal generator~\eqref{eq:system-signal-generator}. Setting $P = I$ and $Q = I$ produces~\eqref{eq:family-linear-astolfi}, with $\Delta \in \R^{r \times 1}$ satisfying $\spectrum{S} \cap \spectrum{S - \Delta L} = \emptyset$. Admissibility follows by design, since $\ker{P} = \{0\}$ and $\spectrum{S} \cap \spectrum{S - \Delta L} = \emptyset$.

\subsection{Moment matching via Krylov projections}
Any model of the form~\eqref{eq:family-linear-Krylov} is a special case of the family~\eqref{eq:family-linear}, where the surrogate model~\eqref{eq:system-rom-surrogate} \textit{coincides} with the system~\eqref{eq:system-linear}. Setting ${\Pi = I}$ and ${\Delta = B}$ gives ${A = S - BL}$, which directly yields~\eqref{eq:family-linear-Krylov}. Admissibility of $P$, $Q$, and $\Delta$ reduces to ensuring that ${\ker P}$ is an $(S, \Delta)$-controlled invariant subspace contained in ${\ker C}$ and that $PQ = I$. If $P$ and $Q$ are constructed as in Theorem~\ref{thm:Krylov}, these conditions always hold.

\subsection{Least squares moment matching method at zero }

Model reduction by least squares moment matching at zero has been studied extensively~\citep{shoji1985model,aguirre1992least,aguirre1994partial,aguirre1994model,smith1995least,aguirre1995algorithm}. This section highlights that these methods yield models within the family~\eqref{eq:family-linear}. For illustration, we focus on the approach proposed by Smith and Lucas~\citep{smith1995least}.

Consider system~\eqref{eq:system-linear} and the model~\eqref{eq:system-rom}. Assume $0 \not \in \spectrum{A}$. The Laurent series expansion of the transfer function~\eqref{eq:transfer-function} of system~\eqref{eq:system-linear}  around zero is
\beq \label{eq:transfer-function-system-smith}  
W(s) = \eta_{0}(0) + \eta_{1}(0)s + \eta_{2}(0)\frac{s^2}{2!} + \ldots \, .  
\eeq  
The Smith-Lucas method seeks to determine the coefficients of the transfer function~\eqref{eq:transfer-function-model} of the model~\eqref{eq:system-rom}, defined as
\beq \label{eq:transfer-function-rom-smith}
\hat{W}(s) = 
\frac{ \hat \beta_{r-1} s^{r-1} + \ldots +\hat \beta_0
}{ s^{r} + \hat \alpha_{r-1} s^{r-1} + \ldots +\hat \alpha_0} ,
\eeq
by enforcing the interpolation conditions
\beq  \label{eq:matching-condition-linear-smith}
\eta_{j}(0)= \hat{\eta}_{j}(0),  \qquad 0 \le j \le 2r+q-1,
\eeq 
where $q$ is a given positive integer. 
Following~\citep{smith1995least}, this amounts to solving the equation
\beq  \label{eq:matching-condition-system-smith}
\bma 
\begin{array}{cc}
0 & X \\
I & Y
\end{array}
\ema 
\bma
\begin{array}{c}
\hat \alpha \\ 
\hat \beta
\end{array}
\ema  
= 
\bma
\begin{array}{c}
\mu \\
0
\end{array}
\ema ,
\eeq
in the unknowns $\hat \alpha \in \R^r$ and $\hat \beta \in \R^r$, where
\bseq
\begin{align}
\hat \alpha 
&= \bma \begin{array}{ccc} \hat \alpha _0  & \ldots & \hat \alpha_{r-1} \end{array}\ema^{\transpose} , \\
\hat \beta 
&= \bma \begin{array}{ccc} \hat \beta _0  & \ldots & \hat \beta_{r-1} \end{array} \ema^{\transpose} , \\
\mu
&= \bma \begin{array}{ccc} \eta_0(0)  & \ldots & \eta_{r+q-1}(0) \end{array} \ema^{\transpose}  , \\
X
&=  -
\bma 
\begin{array}{cccc} 
\eta_r(0)  & \eta_{r-1}(0) & \ldots & \eta_{1}(0) \\
\eta_{r+1}(0)  & \eta_{r}(0) & \ldots & \eta_{2}(0) \\
\vdots &  \vdots & \ddots & \vdots \\
\eta_{2r+q-1}(0)  & \eta_{2r+q-2}(0) & \ldots & \eta_{r+q}(0) 
\end{array} 
\ema  , \\
Y
&=  -
\bma 
\begin{array}{cccc} 
\eta_0(0)  & 0 & \ldots & 0 \\
\eta_{1}(0)  & \eta_{0}(0) & \ddots & \vdots  \\
\vdots &  \vdots & \ddots & 0 \\
\eta_{r-1}(0)  & \eta_{r-2}(0) & \ldots & \eta_{0}(0) 
\end{array} 
\ema  .
\end{align}
\eseq
Assuming $\eta_0(0)\not=0$ for simplicity and solving~\eqref{eq:matching-condition-system-smith}, in turn, amounts to solving the system of equations
\bseq
\begin{align}
&X^{\transpose}  X \hat \alpha = X^{\transpose} \mu,  \label{eq:LS-equation-alpha} \\
&\hat \alpha + Y  \hat \beta = 0, \label{eq:LS-equation-beta} 
\end{align}
\eseq
the unique solution of which is ${\hat \alpha  =  (X^{\transpose}  X)^{-1} X^{\transpose} \mu}$ and ${\hat \beta =  -Y^{-1} \hat \alpha.}$
Adapting notation and terminology to the present context, a particularly interesting conclusion of~\citep{smith1995least} is
\begin{quoting}[
     indentfirst=true,
     leftmargin=\parindent,
     rightmargin=\parindent] {\itshape
``... the technique may now be thought of as the two-step process of denominator calculation, by solving~\eqref{eq:LS-equation-alpha}  in a least-squares sense, and then numerator calculation by simple substitution into~\eqref{eq:LS-equation-beta}. In solving~\eqref{eq:LS-equation-alpha} it is seen that the Euclidean norm of the vector $X \alpha - \mu$ is minimized, and $\mu$ contains the first $r + q$ moments [at zero] of the system, whereas $X \alpha$ is an estimate of the moments [at zero] of the corresponding reduced model because $X$, contains the ... moment parameters of the full system and not those of the reduced model. Hence, it is clear that the method minimizes the index 
\beq \label{eq:index-LS}
\mathcal{J} = \sum_{j=0}^{r+q-1} \left| \eta_{j}(0) - \hat{\eta}_{j}(0) \right|^2 
\eeq
for the denominator, where $\hat{\eta}_{j}(0)$, with $0 \le i \le r+q-1$, are estimates of the moments [at zero] of the reduced model (contained in $X\alpha$), and then matches the first $r$ moments [at zero] \textit{exactly} for the numerator coefficients.'' }
\end{quoting}

We conclude that the method presented in~\citep{smith1995least} boils down to a two-step model reduction process, in which the first $r$ moments at zero are matched \textit{exactly} and the first $r+q$ moments at zero are matched \textit{in a least squares sense}.  Consequently, any model obtained via the Smith-Lucas method belongs to the family of models~\eqref{eq:family-linear} and corresponds to the special case in which the characteristic polynomial of the signal generator~\eqref{eq:system-signal-generator} is
\beq \label{eq:characteristic-polynomial-LS}
\chi_{S}(s)= s^{r+q}
\eeq
and that of the surrogate signal generator~\eqref{eq:system-signal-generator-surrogate} is 
\beq \label{eq:characteristic-polynomial-LS-surrogate}
 \chi_{\overbar S}(s)=s^{r} ,
\eeq
This property can be enforced by selecting ${P}$, $\Delta$ and $Q$ such that ${SQ = Q \bar S}$, as detailed by the following result.

\begin{theorem} \label{thm:preservation-interpolation-points}
Consider system~\eqref{eq:system-linear} and the family of models defined by~\eqref{eq:family-linear}. Suppose Assumptions~\ref{ass:minimality} and~\ref{ass:signal-generator-linear} hold. Let ${P \in \R^{r \times \nu}}$,  ${\Delta \in\R^{\nu \times 1}}$ and ${Q \in \R^{\nu \times r}}$  be admissible for the parameterization~\eqref{eq:family-linear} and 
${\bar{S}\in \R^{r \times r}}$ such that ${SQ = Q \bar S}$. Then the model \eqref{eq:family-linear} achieves moment matching at zero (up to the order ${r}$) and least squares moment matching at zero (up to the order ${r+q}$).
\end{theorem}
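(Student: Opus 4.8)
The plan is to combine Theorem~\ref{thm:1} (which already guarantees that any admissible instance of~\eqref{eq:family-linear} achieves least squares moment matching at $\spectrum{S}$, hence at zero up to order $r+q-1$ when $\chi_S(s)=s^{r+q}$) with an extra argument showing that, under the compatibility condition $SQ=Q\bar S$, the first $r$ moments at zero are matched \emph{exactly}. So the only genuinely new content is the ``moment matching at zero up to order $r$'' claim; the least squares part is a direct specialization of Theorem~\ref{thm:1} once we observe that Assumption~\ref{ass:signal-generator-linear} with $\chi_S(s)=s^{r+q}$ forces $S$ nilpotent, whence $S+S^{\transpose}=0$ is \emph{not} automatic — so I will instead invoke the interpolation-condition formulation~\eqref{eq:matching-condition-linear} directly, since $\spectrum{S}=\{0\}$ and least squares moment matching ``at $\spectrum{S}$'' is exactly minimization of~\eqref{eq:index} with a single interpolation point $s_1=0$ of order $k_1=r+q-1$.

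For the exact-matching part, the key step is to show that the compatibility relation $SQ=Q\bar S$ turns the surrogate model~\eqref{eq:system-rom-surrogate} into an \emph{exact} restriction of the interpolation data at zero. First I would note that $SQ = Q\bar S$ together with $PQ=I$ gives $\bar S = PSQ$ and, by induction, $S^{k}Q = Q\bar S^{k}$ for all $k\ge 0$. Since $\ker P$ is $(S,\Delta)$-controlled invariant and $\ker P\subset\ker C\Pi$, and since $PQ=I$ makes $QP$ a projector with image complementary to $\ker P$, the relation $S^kQ=Q\bar S^k$ propagates to the reduced Sylvester solution: writing $\bar\Pi = \Pi Q$ (the ``reduced'' $\Pi$ of~\eqref{eq:Sylvester-equation-surrogate}), one checks $A\bar\Pi + B\bar L = A\Pi Q + BLQ = (\Pi S)Q = \Pi Q\bar S = \bar\Pi\bar S$, so $\bar\Pi=\Pi Q$ solves~\eqref{eq:Sylvester-equation-surrogate}. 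Then the moments of the reduced model at zero of orders $0,\dots,r-1$ are read off, via Lemma~\ref{lemma:astolfi-3} applied to the pair $(\bar S,\bar L)$ and the matrix $C\bar\Pi = C\Pi Q = \bar H Q = H$, from the same columns of $C\Pi T$ that encode $\eta_0(0),\dots,\eta_{r-1}(0)$ of the original system — because $SQ=Q\bar S$ means $Q$ embeds the order-$r$ Jordan block of $\bar S$ as an invariant subspace of the order-$(r+q)$ Jordan block of $S$, so the first $r$ moment coordinates are literally preserved. This yields $\hat\eta_j(0)=\eta_j(0)$ for $0\le j\le r-1$, i.e.\ moment matching at zero up to order $r$.

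Concretely I would organize it as: (1) reduce to the single-point setting, identify $\spectrum{S}=\{0\}$, $\nu=r+q$, and restate what ``least squares moment matching at $\spectrum{S}$'' means via~\eqref{eq:index-LS}; (2) apply the dual two-step interpretation~\eqref{eq:system-signal-generator-surrogate}--\eqref{eq:system-signal-generator-surrogate-reduced-equivalent} to write the model as a moment-matching model (in the sense of~\eqref{eq:family-linear-astolfi}) of the reduced signal generator $(\bar S,\bar L)$; (3) use $SQ=Q\bar S$, $PQ=I$ and admissibility to show $\bar\Pi=\Pi Q$ solves~\eqref{eq:Sylvester-equation-surrogate} and $C\bar\Pi=H$, hence the model matches, at zero, exactly the moments $\eta_0(0),\dots,\eta_{r-1}(0)$ of the \emph{original} system — this is the exact-matching conclusion; (4) invoke Theorem~\ref{thm:1} (or a direct argument) for the least squares conclusion up to order $r+q-1$, noting that $SQ=Q\bar S$ is consistent with admissibility and does not disturb it. The main obstacle is step (3): making rigorous the claim that $SQ=Q\bar S$ forces $Q$ to select exactly the ``low-order'' moment coordinates, rather than some invariant subspace that mixes moments of different orders. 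One clean way to pin this down is to choose coordinates in which $S$ is a single nilpotent Jordan block of size $r+q$; then $SQ=Q\bar S$ with $\bar S$ nilpotent of size $r$ and $PQ=I$ forces (up to a change of basis in the reduced space that can be absorbed into $(P,Q,\Delta)$) $\Image Q$ to be the span of the first $r$ standard basis vectors and $\bar S$ the corresponding Jordan block, after which the moment bookkeeping in Lemma~\ref{lemma:astolfi-3} is immediate and the exact matching of $\eta_0(0),\dots,\eta_{r-1}(0)$ follows by inspection.
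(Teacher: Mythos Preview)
Your overall strategy coincides with the paper's: the paper also splits the claim into (a) least squares moment matching at zero up to order $r+q$, handled by invoking Theorem~\ref{thm:moments-optimization-linear} directly, and (b) exact matching at zero up to order $r$, handled via the surrogate signal generator and the spectrum inclusion $\spectrum{\bar S}\subset\spectrum{S}$ that follows from $SQ=Q\bar S$. Your treatment of (b) is a considerably more explicit version of the paper's one-line argument: the paper simply asserts that $SQ=Q\bar S$ forces $\chi_{\bar S}(s)=s^{r}$ and concludes, without verifying that $\bar\Pi=\Pi Q$ solves~\eqref{eq:Sylvester-equation-surrogate} or that the reduced pair $(\bar S,\bar L)$ encodes precisely the first $r$ moments at zero of the original system; your $\bar\Pi=\Pi Q$ computation and Jordan-block bookkeeping fill exactly those gaps.

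On (a), you are right that the hypothesis $S+S^{\transpose}=0$ in Theorems~\ref{thm:moments-optimization-linear} and~\ref{thm:1} is incompatible with a non-derogatory nilpotent $S$ (a real skew-symmetric nilpotent matrix is zero, violating Assumption~\ref{ass:signal-generator-linear}), so neither theorem applies as stated. The paper's proof does not flag this and simply invokes Theorem~\ref{thm:moments-optimization-linear}; you do flag it, which is good, but your proposed workaround --- ``invoke the interpolation-condition formulation~\eqref{eq:matching-condition-linear} directly'' --- only restates what least squares moment matching means and does not prove that the model~\eqref{eq:family-linear} minimizes~\eqref{eq:index-LS}. To actually close this you would need either a variant of Theorem~\ref{thm:1} for nilpotent $S$ (e.g.\ replacing the norm identity that the skew-symmetry delivers by a direct computation in the Jordan basis of $S$), or a self-contained argument specific to the single-point case $s_1=0$. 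As written, both your proof and the paper's share this gap; you have the advantage of having identified it.
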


\begin{proof}
Theorem~\ref{thm:moments-optimization-linear} directly implies that the model achieves least squares moment matching at zero up to the order ${r+q}$. Moreover, the model~\eqref{eq:family-linear} is such that the interpolation conditions 
\beq  \label{eq:matching-condition-linear-smith-explained}
\eta_{j}(0)= \hat{\eta}_{j}(0),  \qquad 0 \le j \le r,
\eeq 
hold, since the corresponding surrogate signal generator~\eqref{eq:system-signal-generator-surrogate}
has characteristic polynomial~\eqref{eq:characteristic-polynomial-LS-surrogate}. Since by assumption ${SQ = Q \bar S}$, we conclude that ${\spectrum{\bar S} \subset \spectrum{S}}$ and, hence, that the model~\eqref{eq:family-linear} achieves moment matching at zero up to the order ${r}$.
\end{proof}

\noindent
Theorem~\ref{thm:preservation-interpolation-points} provides a new interpretation which places the Smith-Lucas method on firm system-theoretic footing, showing it fits naturally within the two-step surrogate framework introduced in this paper.

\section{Conclusion} \label{sec:conclusion}
This paper has revisited least squares moment matching, showing that under specific assumptions, the process has been interpreted as a two-step model reduction. The first step constructes a surrogate model to satisfy interpolation constraints; the second step computes a reduced-order model by projection. This reinterpretation has provided new insights into the structure of the least-squares model reduction method by Lucas and Smith, revealing its underlying system-theoretic structure. Future work could investigate extending our findings to nonlinear systems, leveraging results from~\citep{astolfi2010model} and to time-varying systems using insights from~\citep{scarciotti2015modelexplicit}.


\end{document}